\newcommand{\blackboardbold}[1]{\ensuremath{\mathbf{#1}}}
\def\ZZ{\blackboardbold{Z}}
\def\QQ{\blackboardbold{Q}}
\def\FF{\blackboardbold{F}}
\def\GG{\blackboardbold{G}}
\newcommand{\mathtext}[1]{\ensuremath{\mathrm{#1}}}
\newcommand{\HP}{\ensuremath{\blackboardbold{HP}}}
\newcommand{\OP}{\ensuremath{\blackboardbold{CaP}}}
\newcommand{\MO}{\ensuremath{\mathtext{MO}}}
\newcommand{\MU}{\ensuremath{\mathtext{MU}}}
\newcommand{\BP}{\ensuremath{\mrm{BP}}}
\newcommand{\bb}[1]{\ensuremath{\langle #1 \rangle}}
\newcommand{\MString}{\MO\bb8}
\newcommand{\MSpin}{\mathtext{MSpin}}
\newcommand{\tmf}{\mathtext{tmf}}
\newcommand{\mrm}[1]{\mathrm{#1}}
\newcommand{\inv}[2]{\big[\tfrac#1#2\big]\!}
\renewcommand{\H}{\mathtext{H}}
\newcommand{\K}{\mathtext{K}}
\newcommand{\Tot}{\mathtext{Tot}}
\newcommand{\iso}{\ensuremath{\cong}}
\newcommand{\tensor}{\otimes}
\newcommand{\Hom}{\mathtext{Hom}}
\DeclareMathOperator{\cofiber}{cofiber}
\DeclareMathOperator{\cone}{cone}
\DeclareMathOperator{\colim}{colim}
\DeclareMathOperator{\hocolim}{hocolim}
\theoremstyle{plain}
\newtheorem*{theorem*}{Theorem}
\newtheorem{theorem}{Theorem}
\newtheorem*{proposition*}{Proposition}
\newtheorem{proposition}[theorem]{Proposition}
\newtheorem{corollary}[theorem]{Corollary}
\newtheorem*{corollary*}{Corollary}
\newtheorem*{lemma*}{Lemma}
\newtheorem*{exercise*}{Exercise}
\newtheorem*{conjecture*}{Conjecture}
\newtheorem*{question*}{Question}
\theoremstyle{definition}
\newtheorem*{definition*}{Definition}
\newtheorem*{example*}{Example}
\newtheorem*{examples*}{Examples}
\newtheorem*{claim*}{Claim}
\newtheorem*{remark*}{Remark}
\theoremstyle{plain}
\newcommand{\xto}{\xrightarrow}
\title{tmf Is Not a Ring Spectrum Quotient of String Bordism}
\author{Carl McTague}
\email{mctague@math.jhu.edu}
\urladdr{http://www.mctague.org/carl}
\address{Mathematics Department, University of Rochester, Rochester, NY 14627, USA}
\address{Mathematics Department, Johns Hopkins University, Baltimore, MD 21218, USA}
\begin{document}

\begin{abstract}
This paper shows that $\mathrm{tmf}{\big[\tfrac16\big]\!}$ is not a ring spectrum quotient of $\mathrm{MO}\langle8\rangle{\big[\tfrac16\big]\!}$. In~fact, for any prime $p>3$ and any sequence $X$ of homogeneous elements of $\pi_*\mathrm{MO}\langle8\rangle$, the $\pi_*\mathrm{MO}\langle8\rangle$-module $$\pi_*\big(\mathrm{MO}\langle8\rangle_{(p)}/X\big)$$ is not (even abstractly) isomorphic to $\pi_*\mathrm{tmf}_{(p)}$.

It does so by showing that, for any commutative ring spectrum $R$ and any sequence $X$ of homogeneous elements of $\pi_*(R)$, there is an isomorphism of graded $\mathbf{Q}$-vector spaces $$\pi_*(R/X)\otimes\mathbf{Q} \cong \mathrm{H}_*(\mathrm{Tot}(\mathrm{K}(X)))\otimes\mathbf{Q},$$ where the right-hand side is the rational homology of the (total) Koszul complex of $X$, which is strictly bigger than $\pi_*(R)/(X)\otimes\mathbf{Q}$ unless $X$ is a $\pi_*(R)\otimes\mathbf{Q}$-quasi-regular sequence. The result then follows from the fact that the kernel of the $p$-local Witten genus cannot be generated by a $\pi_*\mathrm{MO}\langle8\rangle\otimes\mathbf{Q}$-quasi-regular sequence.
\end{abstract}

\maketitle

\section{Introduction}

Given a commutative ring spectrum $R$, an $R$-module $M$, and a \emph{sequence} $X=x_1,x_2,\dots$ of homogeneous elements of $\pi_*(R)$, one can construct the quotient $R$-module $M/XM$, by iteratively forming cofibers
\begin{align}
M\big/(x_1,\dots,x_{n+1})M = \cofiber \left[ \Sigma^{\deg x_{n+1}} M\big/(x_1,\dots,x_n)M \xto{x_{n+1}} M\big/(x_1,\dots,x_n)M \right] \label{eqn:cofiber}
\end{align}
and, if the sequence is infinite, passing to their telescope
$$
M\big/(x_1,x_2,x_3,\dots)M = \hocolim_n \left[ M\big/(x_1,\dots,x_n)M \right].
$$

When $M=R$, the resulting $R$-module, denoted $R/X$, often comes equipped with a homotopy commutative $R$-algebra structure. For example, if $R$ is a commutative $S$-algebra with $\pi_i(R)=0$ for $i$ odd, and if $X$ is a sequence of non zerodivisors such that $\pi_*(R/X)$ is concentrated in degrees congruent to 0 modulo 4, then $R/X$, regarded as an object in the \emph{homotopy category} of $R$-modules, admits a unique (up to canonical isomorphism) ring object structure, and it is commutative and associative \cite[Thm.~V.3.2]{elmendorf97}. To emphasize: although $R$ is required to have a commutative ring spectrum structure, the quotient $R/X$ is only guaranteed to admit a homotopy commutative one.

\emph{If $X$ is regular} in the sense that each element $x_{n+1}$ of $X$ is not a zerodivisor for $\pi_*(R)\big/(x_1,\dots,x_n)$, then the long exact homotopy sequence for each cofiber breaks into short exact sequences, and  $$\pi_*(R/X)\iso\pi_*(R)\big/(X),$$ where $(X)$ is the \emph{ideal} generated by the elements of $X$.%

\smallskip

This, together with a theory of localization, makes possible a one-page construction (from $\MU$ as a commutative ring spectrum, and for $p>2$)\footnote{When $p=2$, $\pi_*(\mathrm{BP})$ is not concentrated in degrees congruent to 0 modulo 4. Nevertheless, $\BP$ satisfies weaker conditions \cite[Thm.~2.7]{strickland-1999} ensuring that it admits a homotopy commutative $\MU_{(2)}$-algebra structure \cite[Prop.~2.9]{strickland-1999}; the same is true of $\BP\bb{n}$ and $\mathrm{E}(n)$, provided the generators $v_i$ are chosen carefully \cite[Prop.~2.10]{strickland-1999}.} of a rogue's gallery of spectra (listed with coefficient ring),
\begin{align*}
  \BP &\quad \ZZ_{(p)}[v_1,v_2,\dots] &
  \BP\bb{n} &\quad \ZZ_{(p)}[v_1,\dots,v_n] \\
  \mathrm{E}(n) &\quad \ZZ_{(p)}[v_1,\dots,v_n,v_n^{-1}] &
  \mathrm{P}(n) &\quad \FF_p[v_n,v_{n+1},\dots] \\
  \mathrm{B}(n) &\quad \FF_p[v_n^{-1},v_n,v_{n+1},\dots] &
  \mathrm{k}(n) &\quad \FF_p[v_n] \\
  \mathrm{K}(n) &\quad \FF_p[v_n,v_n^{-1}],
\end{align*}
all equipped with unique (up to canonical isomorphism) homotopy commutative $\MU$-algebra structures \cite[\S V.4]{elmendorf97}.  (Previously these spectra had been constructed using Baas-Sullivan theory or the Landweber exact functor theorem, and laboriously shown to admit ring spectrum structures \cite{sullivan69,baas-1973,mironov-1975,shimada-yagita-1976,morava-1979}.)

It also makes possible a one-line construction of Landweber-Ravenel-Stong elliptic cohomology \cite{lrs-1993} (from $\MSpin$ as a commutative ring spectrum),
$$
  \mathrm{ell}\inv12 = \MSpin\inv12 / \big(\text{kernel of the Ochanine elliptic genus}\big),
$$
similarly equipped with a unique (up to canonical isomorphism) homotopy commutative $\MSpin$-algebra structure.
(In fact, since the kernel of the Ochanine elliptic genus is the ideal consisting of $\HP^2$-bundles, one may write
$$
  \mathrm{ell}\inv12 = \MSpin\inv12 / \big(\text{$\HP^2$-bundles}\big)
$$
\cite{kreck-stolz-93}; cf.\ also the Conner-Floyd isomorphism for $\mathrm{ell}$ \cite{hovey-1995,hopkins-hovey-1992}.)

\smallskip

By contrast, this paper shows that it is impossible to construct the spectrum $\tmf$ of topological modular forms \cite{hopkins-1995,douglas-francis-henriques-hill-2014} in an analogous way from the string bordism ring spectrum $\MString$. (Even though
\begin{align*}
\pi_*\tmf\inv16
&\iso \pi_*(\MString)\inv16/\big(\text{kernel of the Witten genus}\big) \\
&= \pi_*(\MString)\inv16/\big(\text{$\OP^2$-bundles}\big)
\end{align*}
\cite{mctague-2014}.)

\begin{theorem}
  \label{thm:noX} For any prime $p>3$ and any sequence $X$ of homogeneous elements of $\pi_*\MString$, the $\pi_*\MString$-module
  $$
    \pi_*\big(\MString_{(p)}/X\big)
  $$
  is not (even abstractly) isomorphic to $\pi_*\tmf_{(p)}$.
\end{theorem}

As we shall see, the reason is twofold:
\begin{enumerate}[(I)]
\item For any (homotopy) commutative ring spectrum $R$ and any sequence $X$ of homogeneous elements of $\pi_*(R)$, there is an isomorphism of graded $\QQ$-vector spaces $$\pi_*(R/X)\tensor\QQ \cong \H_*(\Tot(\K(X)))\tensor\QQ,$$ where the right-hand side is the rational homology of the (total) Koszul complex of $X$ (Theorem~\ref{thm:koszul}), which is strictly bigger than $\pi_*(R)/(X)\tensor\QQ$ unless $X$ is a $\pi_*(R)\tensor\QQ$-quasi-regular sequence (Theorem~\ref{thm:bourbaki}).
\item The kernel of the $p$-local Witten genus
  $$
    \pi_*\MString_{(p)}\to\pi_*\tmf_{(p)}
  $$
cannot be generated by a $\pi_*\MString\tensor\QQ$-quasi-regular sequence (Proposition~\ref{prop:witten-quasi-regular}).
\end{enumerate}

\begin{corollary}
  \label{cor:noX}
  There do not exist $\MString$-module (let alone $\MString$-algebra) isomorphisms
  \begin{align*}
    \MString_{(p)}/X &\to \tmf_{(p)} &
    \MString\inv16/X &\to \tmf\inv16.
  \end{align*}
\end{corollary}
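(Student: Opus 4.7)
The plan is to derive Corollary~\ref{cor:noX} directly from Theorem~\ref{thm:noX}, dispatching the two isomorphism claims in turn.

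First, for the map $\MString_{(p)}/X \to \tmf_{(p)}$: any $\MString$-module isomorphism $f \colon \MString_{(p)}/X \to \tmf_{(p)}$ is, by definition, a weak equivalence in the category of $\MString$-modules and so induces an isomorphism $\pi_*f \colon \pi_*\bigl(\MString_{(p)}/X\bigr) \to \pi_*\tmf_{(p)}$ on homotopy groups. Because $f$ respects the $\MString$-action, $\pi_*f$ is automatically $\pi_*\MString$-linear, producing precisely the $\pi_*\MString$-module isomorphism that Theorem~\ref{thm:noX} forbids.

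Second, for the map $\MString\inv16/X \to \tmf\inv16$: I would apply the $p$-localization functor for our fixed prime $p>3$. Since $p$ is already invertible in $\ZZ\inv16$, $p$-localization of an $\inv16$-spectrum agrees with its ordinary $p$-localization, giving $\bigl(\MString\inv16\bigr)_{(p)} = \MString_{(p)}$ and $\bigl(\tmf\inv16\bigr)_{(p)} = \tmf_{(p)}$. Moreover, $(-)_{(p)}$ is smashing, so it preserves cofiber sequences and filtered colimits; it therefore carries the iterated cofiber and telescope defining $\MString\inv16/X$ to the corresponding construction defining $\MString_{(p)}/X$. A hypothetical $\MString$-module isomorphism $\MString\inv16/X \to \tmf\inv16$ would then $p$-localize to an $\MString$-module isomorphism $\MString_{(p)}/X \to \tmf_{(p)}$, reducing to the case already handled.

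I do not anticipate a serious obstacle here: the substantive content lies in Theorem~\ref{thm:noX}, and Corollary~\ref{cor:noX} is a formal consequence once that theorem is in hand. The only mild point that warrants care is verifying that $p$-localization commutes with the ring-spectrum quotient construction recalled in Section~2, but this is immediate from the facts that $(-)_{(p)}$ is smashing and that the quotient is built out of cofiber sequences and a mapping telescope, both of which smashing localizations preserve.
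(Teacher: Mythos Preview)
Your proposal is correct and follows the same approach as the paper: derive the corollary from Theorem~\ref{thm:noX} by passing to $\pi_*\MString$-modules. The paper's proof is a single sentence (``If such isomorphisms existed then their underlying $\pi_*\MString$-module isomorphisms would contradict Theorem~\ref{thm:noX}''), so your treatment of the $\inv16$ case via $p$-localization is more explicit than the paper's, but it is exactly the step the paper leaves to the reader.
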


(If such isomorphisms existed, their underlying $\pi_*\MString$-module isomorphisms would contradict Theorem~\ref{thm:noX}.)

\smallskip

This helps to better appreciate the elaborate known constructions of $\tmf$ and its orientations, cf.~\cite{ando-2001,hopkins-2002,lurie-2009,douglas-francis-henriques-hill-2014}. It also raises the question of whether there is a general modification of, or alternative to, the ring spectrum quotient construction, one better suited to non quasi-regular sequences, which produces $\tmf$ from $\MString$ in a simple way.

\section{The Koszul Complex $\boldsymbol{\K(X)}$}

Let $A$ be a graded and commutative ring, $E$ an $A$-module, and $f:E\to A$ a map of graded $A$-modules. Recall that the \emph{Koszul complex} $\K(f)$ of $f$ is the exterior algebra $\bigwedge E$ equipped with differential
$$
  d_k(e_1\wedge\cdots\wedge e_k) = \sum_{i=1}^k (-1)^{i+1}f(e_i)\;e_1\wedge\cdots\wedge\hat e_i\wedge\cdots\wedge e_k,
$$
where $\hat e_i$ means that the wedge summand $e_i$ is omitted. $\bigwedge E$ is bigraded,
$$
  \deg(e_1\wedge\cdots\wedge e_k) = \big(k,\deg(e_1)+\cdots+\deg(e_k)\big),
$$
and the differential $d_k$ decreases the first component by 1 and preserves the second, so $\K(f)$ may be thought of as a graded complex of $A$-modules
$$
  \cdots \to \wedge^2E \xto{d_2} \wedge^1E\xto{d_1}A\to0.
$$
A (possibly infinite) sequence $X=x_1,x_2,\dots$ of homogeneous elements of $A$ defines a map of graded $A$-modules
\begin{align*}
  f:&\bigoplus_i A(-\deg x_i)\to A \\
  f\big(&\textstyle \bigoplus_i a_i\big) = \sum_i x_ia_i.
\end{align*}
Write $\K(X)$ for the Koszul complex of this map.
(Here and below, $N(m)$ denotes the shift of a graded module $N$ defined by $N(m)_k=N_{k+m}$.)

\emph{$\K(f)$ is functorial:} A map $\phi:E'\to E$ of graded $A$-modules extends to a map of bigraded $A$-algebras $\wedge\phi:\wedge E'\to\wedge E$, which defines a map of graded complexes $\K(f\circ\phi)\to\K(f)$.

Also, \emph{$\K(f)$ is exponential:} The sum of graded $A$-linear maps $f:E\to A$ and $f':E'\to A$ is a graded $A$-linear map $f\oplus f':E\oplus E'\to A$, and the isomorphism of graded $A$-modules $\bigwedge (E\oplus E') \iso \bigwedge E\tensor \bigwedge E'$ extends to an isomorphism of graded complexes
$$
  \K(f\oplus f')\iso\K(f)\tensor\K(f').
$$
In particular, if $E'=A(m)$, then the isomorphism of graded $A$-modules
$$
  \textstyle \bigwedge^k\big(E\oplus A(m)\big)\iso\bigwedge^kE\oplus\bigwedge^{k-1}E(m),
$$
induces, for any finite sequence $x_1,\dots,x_{n+1}$ of homogeneous elements of $A$, a short exact sequence of graded complexes
$$
  0 \to \K(x_1,\dots,x_n) \to\K(x_1,\dots,x_{n+1}) \to \K(x_1,\dots,x_n)(-\deg x_{n+1})[-1] \to0
$$
whose connecting homomorphism is induced by multiplication by $x_{n+1}$. (Here and below, $C[m]$ denotes the shift of a complex $(C,d_C)$ defined by $C[m]_i=C_{i+m}$ and $d_{C[1]}=(-1)^md_C$.)

Thus $\K(x_1,\dots,x_{n+1})$ is isomorphic to the mapping cone %
\begin{align}
  \label{eqn:cone}
  \K(x_1,\dots,x_{n+1}) \iso \cone\left[ \K(x_1,\dots,x_n)(-\deg x_{n+1}) \xto{x_{n+1}} \K(x_1,\dots,x_n) \right]
\end{align}
(compare \cite[Tag 0628]{stacks-project} or \cite[Cor.~1.6.13]{bruns-herzog-1993}).
Observe how similar (\ref{eqn:cone}) is to (\ref{eqn:cofiber}).

\section{$\boldsymbol{\pi_*(R\big/X)}$ is Rationally Isomorphic to $\boldsymbol{\H_*(\Tot(\K(X)))}$}

\begin{theorem}
  \label{thm:koszul}
  For any (homotopy) commutative ring spectrum $R$ and any sequence $X$ of homogeneous elements of $\pi_*(R)$, there is an isomorphism of graded $\QQ$-vector spaces
  $$
    \pi_*(R/X) \tensor\QQ \iso \H_*(\Tot(\K(X))) \tensor\QQ
  $$
  making the diagram
$$\begin{tikzcd}
  \pi_*(R) \tensor\QQ \arrow[r] \arrow[d] & \H_0(\K(X)) \tensor\QQ \arrow[d,hook] \\
  \pi_*(R/X) \tensor\QQ \arrow[r,"\iso",leftarrow] & \H_*(\Tot(\K(X))) \tensor\QQ
\end{tikzcd}$$
commute.

\begin{remark*}
  The right-hand side is the rational homology of the \emph{total} Koszul complex, so the isomorphism determines, for each integer $n$, an isomorphism of $\QQ$-vector spaces
  $$
    \pi_n(R/X) \tensor \QQ \iso \bigoplus_{i+j=n} \H_i(\K(X))_j \tensor \QQ,
  $$
  where $(-)_j$ denotes the degree-$j$ component of the graded module $(-)$.
\end{remark*}
\end{theorem}

\begin{remark*}
  Theorem~\ref{thm:koszul} does not hold integrally. In general, there is a spectral sequence $$\H_*(\K(X))\implies\pi_*(R/X)$$
 \cite[p.~185]{morava-1979}. (According to Theorem~\ref{thm:koszul}, this spectral sequence collapses tensor $\QQ$.)
\end{remark*}

\begin{proof}[Proof of Theorem~\ref{thm:koszul}]
  Let $X=x_1,x_2,\dots$ be an infinite sequence. First we use induction to argue that the result holds for each finite subsequence
$$X_n=x_1,\dots,x_n \quad (n\ge0).$$

\smallskip

  (Base case) The result holds integrally for the empty sequence $X_0$. Indeed, by definition $R/X_0=R$ and $\K(X_0)=(\bigwedge \pi_*(R),d)=\pi_*(R)[0]$, so for any integer $i$,
  $$
    \pi_i(R/X_0)=\pi_i(R) = \H_0(\K(X_0))_i = \H_0(\pi_*(R)[0])_i = \H_i(\Tot(\pi_*(R)[0])) = \H_i(\Tot(\K(X_0))).
  $$

  \medskip
  
  (Inductive step) Suppose that the conclusion holds for some subsequence $X_n$ ($n\ge0$). We argue that it then also holds for the subsequence $X_{n+1}$. Let $m=\deg(x_{n+1})$.

  According to (\ref{eqn:cone}), there is a distinguished triangle of graded complexes
  $$\begin{tikzcd}
  \K(X_n)(-m) \ar[r,"x_{n+1}"] & \K(X_n) \ar[r] & \K(X_{n+1}) \ar[r] & \K(X_n)(-m)[-1]
  \end{tikzcd}$$
  and hence a distinguished triangle of total complexes
  $$\begin{tikzcd}
    \Tot(\K(X_n))[-m] \ar[r,"x_{n+1}"] & \Tot(\K(X_n)) \ar[r] & \Tot(\K(X_{n+1})) \ar[r] & \Tot(\K(X_n))[-1-m].
  \end{tikzcd}$$
  Its long exact homology sequence looks like
  $$\hspace{-18pt}\begin{tikzcd}[column sep=tiny]
    \cdots\ar[r]& \H_{i-m}(\Tot(\K(X_n))) \ar[r]& \H_i(\Tot(\K(X_n))) \ar[r]& \H_i(\Tot(\K(X_{n+1}))) \ar[r]& \H_{i-1-m}(\Tot(\K(X_n))) \ar[r]& \cdots,
  \end{tikzcd}$$
  which looks just like the long exact homotopy sequence
$$\hspace{-17pt}\begin{tikzcd}[column sep=5.7ex]
  \cdots\ar[r]& \pi_{i-m} (R/X_n) \ar[r]& \pi_i (R/X_n) \ar[r]& \pi_i (R/X_{n+1}) \ar[r]& \pi_{i-1-m} (R/X_n) \ar[r]&\cdots
\end{tikzcd}$$
  for the cofiber sequence (\ref{eqn:cofiber}) above,
  $$\begin{tikzcd}
    \Sigma^m R/X_n \ar[r,"x_{n+1}"] & R/X_n \ar[r] & R/X_{n+1} \ar[r] & \Sigma^{1+m} R/X_n.
  \end{tikzcd}$$

  Specifically, by inductive hypothesis there are isomorphisms (solid arrows) fitting into the commutative diagram
  $$\hspace{-50pt}
    \begin{tikzcd}[column sep=tiny]
    \cdots\ar[r] & \H_{i-m}(\Tot(\K(X_n))) \tensor\QQ \ar[r] \ar[d,"\iso"] & \H_i(\Tot(\K(X_n)) \tensor\QQ \ar[r] \ar[d,"\iso"] & \H_i(\Tot(\K(X_{n+1}))) \tensor\QQ \ar[r] \ar[d,"\iso",dashed] & \H_{i-1-m}(\Tot(\K(X_n))) \tensor\QQ \ar[r] \ar[d,"\iso"] &\cdots \\
    \cdots\ar[r] & \pi_{i-m} (R/X_n) \tensor\QQ \ar[r] & \pi_i (R/X_n) \tensor\QQ \ar[r] & \pi_i (R/X_{n+1}) \tensor\QQ \ar[r] & \pi_{i-1-m} (R/X_n) \tensor\QQ \ar[r] &\cdots.
    \end{tikzcd}
  $$
  Since exact sequences of vector spaces split, there are isomorphisms (dashed arrows) making the diagram commute.

To see that these dashed isomorphisms fit together to make the diagram in the statement of the theorem commute, sum the middle squares over $i$, and use the inductive hypothesis and the fact that $\K(X_n)\to\K(X_{n+1})$ is a map of graded complexes to form the commutative diagram
  $$\begin{tikzcd}
    & \H_0(\K(X_n)) \tensor\QQ \ar[d,hook] \ar[r] & \H_0(\K(X_{n+1})) \tensor\QQ \ar[d,hook] \\
    \pi_*(R) \tensor\QQ \ar[ur, in=180, out=90] \ar[dr, in=180, out=270] & \H_*(\Tot(\K(X_n))) \tensor\QQ \ar[r] \ar[d,"\iso"] & \H_*(\Tot(\K(X_{n+1}))) \tensor\QQ \ar[d,"\iso"] \\
    & \pi_*(R/X_n) \tensor\QQ \ar[r] & \pi_*(R/X_{n+1}) \tensor\QQ.
  \end{tikzcd}$$

  \smallskip

  By induction, then, the result holds for each finite subsequence $X_n (n\ge0)$.

\medskip

  (Infinite case) The infinite case follows since
  \begin{align*}
    \pi_*(R/X)\tensor\QQ
    &\stackrel{(1)}{\iso} \pi_* \big( \hocolim_n (R/X_n) \big) \tensor\QQ \\
    &\stackrel{(2)}\iso \colim_n \big( \pi_*(R/X_n) \tensor\QQ \big) \\
    &\stackrel{(3)}\iso \colim_n \big( \H_*(\Tot(\K(X_n)))\tensor\QQ \big) \\
    &\stackrel{(4)}\iso \H_*\big(\colim_n(\Tot(\K(X_n)))\tensor\QQ\big) \\
    &\stackrel{(5)}\iso \H_*\big(\Tot(\K(X))\tensor\QQ\big).
  \end{align*}
  These isomorphisms hold by
  \begin{enumerate}
  \item[(1)] definition of $R/X$,
  \item[(2)] properties of $\hocolim$ and cocontinuity of $-\tensor\QQ$ (as it is left adjoint to $\Hom(\QQ,-)$),
  \item[(3)] the finite case proved above,
  \item[(4)] exactness of filtered colimits in the category of $A$-modules, and
  \item[(5)] cocontinuity of $\bigwedge(-)$ (as it is left adjoint to the degree-1 functor $(-)_1$). \qedhere
  \end{enumerate}
\end{proof}

\section{$\boldsymbol{\pi_*(R)\to\pi_*(R\big/X)}$ is surjective only if $\boldsymbol{X}$ is a quasi-regular sequence}

Theorem~\ref{thm:noX} could be proved using regular sequences, but it is easier to prove using \emph{quasi}-regular sequences. Let us recall how regular and quasi-regular sequences are related to the homology of the Koszul complex. %

Consider a commutative (but not necessarily graded) ring $A$, an $A$-module $N$, and an ideal $\mathfrak{X}$ of $A$. The $\mathfrak{X}$-adic topology on $N$ is the topology compatible with the group structure of $N$, for which the submodules $\mathfrak{X}^rN$ ($r\ge0$) form a fundamental system of neighborhoods of zero. This topology is separated if and only if $$\bigcap_{r\ge0}\mathfrak{X}^r=0.$$

Suppose now that $\mathfrak{X}$ is generated by a (possibly infinite) sequence $X=x_1,x_2,\dots$ of elements of $A$. Consider the graded $A$-module $\bigoplus_{r\ge0} \mathfrak{X}^rN$ and the map of graded $A$-modules
$$
a_N^X : A[X_1,X_2,\dots]\tensor_A N \to \bigoplus_{r\ge0} \mathfrak{X}^r N
$$
defined by
$$
a_N^X(P\tensor n) = P(x_1,x_2,\dots) n
$$
for $n$ in $N$ and $P$ a homogeneous polynomial in indeterminants $X_1,X_2,\dots$ (not to be confused with the sequence $X$ and its elements $x_1,x_2,\dots$). Let $\mathfrak{d}$ be the ideal of $A[X_1,X_2,\dots]$ generated by the elements $(x_iX_j-x_jX_i)$ for $1\le i<j$. Then $P(x_1,x_2,\dots)=0$ if $P\in \mathfrak{d}$, so $a_N^X$ determines a map of graded $A$-modules
$$
  \alpha^X_N : (A[X_1,X_2,\dots]/\mathfrak{d}) \tensor_A N \to \bigoplus_{r\ge0} \mathfrak{X}^r N.
$$
By tensor product with $A/\mathfrak{X}$ one obtains a map of graded $A$-modules
$$
\beta^X_N : (A/\mathfrak{X})[X_1,X_2,\dots] \tensor_A N \to \bigoplus_{r\ge0} (\mathfrak{X}^r N / \mathfrak{X}^{r+1}N).
$$
The maps $a^X_N$, $\alpha^X_N$, and $\beta^X_N$ are surjective.

\begin{theorem}[{\cite[\S9, n\textsuperscript{o}~7, Thm.~1]{bourbaki-1980}}]
  \label{thm:bourbaki}
  Consider the following conditions:
  \begin{enumerate}
  \item[(i)] $X$ is $N$-regular.
  \item[(ii)] $\H_i(\K(X)\tensor N)=0$ for all $i>0$.
  \item[(iii)] $\H_1(\K(X)\tensor N)=0$.
  \item[(iv)] $\alpha^X_N$ is bijective.
  \item[(v)] $\beta^X_N$ is bijective.
  \end{enumerate}
  We then have implications (i) $\implies$ (ii) $\implies$ (iii) $\iff$ (iv) $\implies$ (v).

  If, for $1\le i\le n$, the $A$-module $N/(x_1N+\cdots+x_nN)$ is separated in the $\mathfrak{X}$-adic topology, then conditions (i) and (v) are equivalent.
\end{theorem}

(The proof of Theorem~\ref{thm:bourbaki} given in \cite{bourbaki-1980} is for a finite sequence $X$ but applies, \emph{mutatis mutandis}, to an infinite sequence.)

\begin{definition*}
  The sequence $X$ is called \emph{$N$-quasi-regular} if it satisfies condition~(v) of Theorem~\ref{thm:bourbaki}. %
An $A$-quasi-regular sequence is simply called  \emph{quasi-regular}.
\end{definition*}

In short, every $N$-regular sequence $X$ satisfies $\H_1(\K(X)\tensor N)=0$, and every sequence $X$ satisfying $\H_1(\K(X)\tensor N)=0$ is $N$-quasi-regular.

\bigskip

Theorems~\ref{thm:koszul} and \ref{thm:bourbaki} together yield:

\begin{corollary}
  For any (homotopy) commutative ring spectrum $R$ and any sequence $X$ of homogeneous elements of $\pi_*(R)$, the homomorphism
  $$\pi_*(R)\to\pi_*(R/X)$$
  is surjective only if $X$ is $\pi_*(R)\tensor\QQ$-quasi-regular.
 \end{corollary}

\begin{proof}
  If $\pi_*(R)\to\pi_*(R/X)$ is surjective, then $\pi_*(R)\tensor\QQ\to\pi_*(R/X)\tensor\QQ$ is too. By Theorem~\ref{thm:koszul}, then, $\H_1(\K(X))\tensor\QQ=0$. And by Theorem~\ref{thm:bourbaki}, $X$ must be $\pi_*(R)\tensor\QQ$-quasi-regular.
 \end{proof}

\smallskip

Note that, since $\MString$ is connective, Theorem~\ref{thm:noX} could be proved using regular (rather than quasi-regular) sequences, by the following result.

\begin{corollary}[{of Theorem~\ref{thm:bourbaki}, \cite[\S9, n\textsuperscript{o}~7, Cor.~2]{bourbaki-1980}}]
  \label{cor:positive}
  Suppose $A$ is a positively graded ring, $N$ is a graded, bounded-below $A$-module, and $X$ is a sequence of homogeneous elements of $A$ of degree $>0$. Then conditions (i) and (v) of Theorem~\ref{thm:bourbaki} are equivalent.
\end{corollary}

\noindent (The hypotheses ensure that the $\mathfrak X$-adic topology on each of the $A$-modules $M/(x_1M+\cdots+x_nM)$ is separated.)

\section{The kernel of the $\boldsymbol{p}$-local Witten genus cannot be generated by a quasi-regular sequence}

\begin{proposition}
  \label{prop:witten-quasi-regular}
  For $p>3$, the kernel of the $p$-local Witten genus
  $$
    \phi_W\tensor\ZZ_{(p)} : \pi_*\MString_{(p)}\to\pi_*\tmf_{(p)}
  $$
  cannot be generated by a $\pi_*\MString\tensor\QQ$-quasi-regular sequence. (In particular, it cannot be generated by a quasi-regular sequence.)
\end{proposition}

\begin{proof}
  Denote the kernel $\mathfrak K$, and  suppose it was generated by a $\pi_*\MString\tensor\QQ$-quasi-regular sequence $X=x_1,x_2,\dots$.

  Rationally, the Witten genus is a surjective homomorphism of polynomial rings whose generators may be chosen so that
  \begin{align*}
    \pi_*(\MString)\tensor\QQ &\iso \QQ[y_2,y_3,y_4,\dots] \\
    \pi_*(\tmf)\tensor\QQ &\iso \QQ[\GG_2,\GG_3] \\
    \ker \phi_W \tensor \QQ &= (y_4,y_5,y_6,\dots)
  \end{align*}
  where $\deg(y_i)=\deg(\GG_i)=4i$ (see \cite{hovey-2008}, \cite[Prop.~4.4]{bauer-2008}, and \cite[Thm.~6.25]{hopkins-2002}).
It follows that
  $$
    (\mathfrak K^1/\mathfrak K^2)_n \tensor \QQ \iso
    \begin{cases}
      \QQ & \text{if $n=4i\ge16$} \\
      0 & \text{otherwise.}
    \end{cases}
  $$
  Since $X$ is a $\pi_*\MString\tensor\QQ$-quasi-regular sequence generating $\mathfrak K\tensor\QQ$, the map
  $$
    \underbrace{\big(\pi_*\MString/\mathfrak K\big)[X_1,X_2,\dots] \tensor \QQ}_{\QQ[\GG_2,\GG_3][X_1,X_2,\dots]} \to \bigoplus_{r\ge0} \big(\mathfrak K^r/\mathfrak K^{r+1}\big) \tensor \QQ
  $$
  induced by the substitution $X_i\mapsto\overline x_i \in \mathfrak K^1/\mathfrak K^2$ is by definition bijective. \emph{It follows that $X$ has precisely one element of degree $4i$, for each $i\ge4$, and no other elements.}

\smallskip

Let $\mathfrak A$ be the augmentation ideal of $\pi_*\MString_{(p)}$. Applying the snake lemma to the commutative diagram of $\pi_*\MString_{(p)}$-modules
$$\begin{tikzcd}
  0 \ar[r] & \mathfrak K^2 \ar[r] \ar[d] & \mathfrak A^2 \ar[r] \ar[d] & \mathfrak A^2/\mathfrak K^2 \ar[r] \ar[d] & 0 \\
  0 \ar[r] & \mathfrak K^1 \ar[r] & \mathfrak A^1 \ar[r] & \mathfrak A^1/\mathfrak K^1 \ar[r] & 0
  \end{tikzcd}$$
(in which the rows are exact) produces an exact sequence
$$\begin{tikzcd}
  \mathfrak K^1/\mathfrak K^2 \ar[r] & \mathfrak A^1/\mathfrak A^2 \ar[r] & \mathfrak{A}^1/(\mathfrak K^1+\mathfrak A^2) \ar[r] & 0.
\end{tikzcd}$$
Since
$$
  \pi_*\MString_{(p)}/\mathfrak K\iso\ZZ_{(p)}[\GG_2,\GG_3]
$$
(again by \cite[Prop.~4.4]{bauer-2008} and \cite[Thm.~6.25]{hopkins-2002}, see also \cite{mctague-2014}), it follows that
$$
  \mathfrak{A}^1/(\mathfrak K^1+\mathfrak A^2) \iso \ZZ_{(p)}\{\GG_2,\GG_3\}
$$
and therefore that $\mathfrak K^1/\mathfrak K^2\to\mathfrak A^1/\mathfrak A^2$ is surjective in degrees $>12$.

According to Hovey's calculations \cite{hovey-2008} (see also \cite[Thm.~4]{mctague-2014})
\begin{align*}
  (\mathfrak A^1/\mathfrak A^2)_n &\iso
                                \begin{cases}
                                  \ZZ_{(p)} \oplus \ZZ/p & \text{if $n=2(p^i+p^j)$ for some $0<i<j$} \\
                                  \ZZ_{(p)} & \text{if $n=4m\ge8$ but $n\ne2(p^i+p^j)$ for any $0<i<j$} \\
                                  0 & \text{otherwise.}
                                \end{cases}
\end{align*}
Since no single element can generate $\ZZ_{(p)}\oplus\ZZ/p$ as a $\ZZ_{(p)}$-module, it follows that the map
$$
    \big(\underbrace{\pi_*\MString_{(p)}/\mathfrak K}_{\ZZ_{(p)}[\GG_2,\GG_3]}\big)[X_1,X_2,\dots] \to \bigoplus_{r\ge0} \big(\mathfrak K^r/\mathfrak K^{r+1}\big)
$$
  induced by the substitution $X_i\to\overline x_i\in\mathfrak K^1/\mathfrak K^2$ cannot be surjective, contradicting the assumption that $X$ generates $\mathfrak K$.
\end{proof}

\section*{Acknowledgments}

Thanks to Jack Morava, John Greenlees, Nitu Kitchloo, Burt Totaro, Doug Ravenel, John Lind, and Doug Haessig for helpful conversations.

\end{document}